\numberwithin{equation}{section}
\theoremstyle{plain}
\newtheorem{theorem}{Theorem}[section]
\newtheorem{lemma}[theorem]{Lemma}
\newtheorem{remark}[theorem]{Remark}
\begin{document}

\title{\textbf{\Large Fractional nonlinear filtering problems and their associated fractional Zakai equations}}
\author{ { {\textsc{Sabir Umarov}}}\thanks{
Department of Mathematics,
                        University of New Haven, 300 Boston Post Rd, West Haven, CT 06511; 
                        sumarov@newhaven.edu},
~  \textsc{Frederick Daum$^{\dagger}$, Kenric Nelson} \thanks{Raytheon, 235 Presidential Way
Woburn, MA 01801; Frederick$_{-}$E$_{-}$Daum@raytheon.com, Kenric$_{-}$P$_{-}$Nelson@raytheon.com} 
}
\date{ }
\maketitle
\vspace{-6mm}
\renewcommand{\thefootnote}{\fnsymbol{footnote}}

\begin{abstract}
In this paper we discuss fractional generalizations of the filtering problem. The "fractional" nature comes from time-changed state or observation processes, basic ingredients of the filtering problem. The mathematical feature of the fractional filtering problem emerges as the Riemann-Liouville fractional derivative in the associated Zakai equation. We discuss fractional generalizations of the filtering problem whose state and observation processes are driven by time-changed Brownian motion or/and  L\'evy process.


\footnote[0]{\textit{AMS 2000 subject classifications:} Primary 60H10, 35S10; secondary 60G51, 60H05.
     \textit{Keywords:} time-change, stochastic differential equation, filtering problem, Zakai equation,
     fractional order differential equation, distributed order differential equation,
     pseudo-differential operator, L\'evy process, stable subordinator.}
\end{abstract}

\section{Introduction}

The filtering problem is formulated as follows. Let $(\Omega, \mathcal{F}, P)$ be a probability space with a state space $\Omega,$ sigma-algebra $\mathcal{F},$ and probability measure $P.$ Let $X_t:\Omega \to R^n$ be an $R^n$-valued stochastic process defined on $(\Omega, \mathcal{F}, P)$ and called \textit{a state process.} We assume that $X_t$ is governed by the stochastic differential equation
\begin{equation} \label{state}
dX_t=b(t,X_t)dt+g(t,X_t)dB_t,
\end{equation}
with initial condition $X_{t=0}=X_0,$ where $X_0$ is a random variable independent of Brownian motion $B_t,$ the functions $b(t,x)$ and $g(t,x)$ defined for $t>0$ and $x \in R^n$ satisfy some growth and continuity conditions.  
 The state process in the filtering problem can not be observed directly. Suppose $Z_s, \, s \le t,$ is $R^m$-valued stochastic process called \textit{observation process} and related to the process $X_t$ in the noisy environment. The observation process $Z_t$ can be expressed through a stochastic differential equation of the form
\begin{equation}
\label{obsproc}
dZ_t=h(t, X_t)dt +dW_t, ~~ Z_0=0,
\end{equation}
where $h(t,x), \, t > 0, \, x \in R^n,$ is a function satisfying appropriate growth and continuity conditions, and $W_t$ is an $m$-dimensional Brownian motion independent of $B_t.$
Let ${\cal{Z}}_t$ be a $\sigma$-algebra generated by the observation process $Z_t.$ We assume that $X_t$ is  
${\cal{Z}}_t$-measurable. 
The filtering problem is to find the best estimation of $X_t$ at time $t$ given $Z_t,$ in the mean square sense. Namely, to find a stochastic process $X^{\ast}_t$ such that
\[
E[\|X_t-X^{\ast}_t\|^2] = \inf {E[\|X_t-Y_t\|^2]},
\] 
where $E$ is the expectation with respect to the probability measure $P$ and $\inf$ is taken over all ${\cal{Z}}_t$-measurable stochastic processes $Y_t \in L_2(P).$ It follows from the abstract theory of functional analysis that $X^{\ast}_t$ is the projection of $X_t$ onto the space of stochastic processes $\mathcal{L}(Z_t)=\{Y\in L_2(P): Y_t ~ \mbox{is} ~ {\cal{Z}}_t-\mbox{measurable}\}.$ The latter can be written in the form $X^{\ast}_t=E[X_t|{\mathcal{Z}}_t].$

Filtering problems arise in many engineering models. One simple example is transmitting of modulated signals. These signals are received with an effect of noisy environment. The received signal has to be filtered in order to be realizable. Thus in this situation filtering problem is about the best estimation of the stochastic process (the modulated signal transmitted in the noisy environment)  given additional information obtained via measurement of parameters of the process (of the signal). 


The filtering problem is called \textit{linear} if the functions $b(t,x)$ and $g(t,x)$ depend on $x$ linearly. The linear filtering problem was studied by Kalman and Bucy \cite{KalmanBucy} in the 1960th. They reduced the linear filtering problem to a linear SDE and a deterministic Riccati type differential equation.
In the case of \textit{non linear filtering} Kushner \cite{Kushner}, Lipster and Shiryaev \cite{LipsterShiryaev}, and Fujisaki, Kallianpur and Kunita \cite{FKK}
 (see also \cite{Rozovskii}) obtained a non linear infinite dimensional stochastic differential equations for the posterior conditional density of $X_t$ given ${\mathcal{Z}}_t.$ However, 
(1) it is not easy to solve these equations, and 
(2) it is computationally 'expensive' due to the two stage calculation procedure (prediction and correction) in the real time.  
Later Zakai \cite{Zakai} obtained a simpler form of the stochastic differential equation for the posterior unnormalized conditional density 
$\Phi(t,x)=p(t,x|{\mathcal{Z}}_t)$ for $X_t$ in the following form:
\begin{equation} \label{Zakai10}
\Phi(t,x)=P(X_0=x)+\int_0^t A^{\ast}\Phi(s,x)ds+\sum_{k=1}^m  \int_0^t h_k(x)\Phi(s,x) dZ^{(k)}_s,
\end{equation}
where the operator $A^{\ast}$ is the dual of the infinitesimal generator $A$ (see Section \ref{Sec: Generalization}) of the Markov process $X_t,$ and $h_k(x), k=1,...,m,$ are components of the random vector-function in equation \eqref{observ}.  Equation \eqref{Zakai10} can be written in the differential form as follows
\begin{equation} \label{Zakai15}
d\Phi(t,x)= A^{\ast} \Phi(t,x) dt+ \sum_{k=1}^m  h_k(x)\Phi(t,x) dZ^{(k)}_t, ~~ \Phi(0,x)=P(X_0=x).
\end{equation}
Equation \eqref{Zakai10} (or \eqref{Zakai15}) is a linear stochastic partial differential equation, and therefore, the methods of solution of linear equations are applicable, including some explicit forms for the solution.

Daum \cite{Daum1987,Daum2010,Daum2012} developed algorithms for solution of nonlinear filtering problems in the class of distributions from the Gaussian family. In paper \cite{Daum1987} he reduced
a solution of the Zakai equation to a solution of the Fokker-Planck equation and a
deterministic matrix Riccati equation, generalizing the classical result of Kalman
and Bucy. In recent works \cite{Daum2010,Daum2012} particle flow algorithms were suggested which
provide several orders of magnitude improvement in the processing of particle filters
by computing Bayes' rule as a flow of the logarithm of the conditional density from the prior to the
posterior, and he derived the corresponding flow of particles from the prior to the posterior.

All the works mentioned above relate to filtering problems with the underlying Gaussian processes. However, many processes naturally arising in the modern science (in particular, in biology, genetics, finance) and engineering  do not obey the Gaussian law. Filtering problems with the state and/or observation processes driven by a L\'evy process were discussed in recent publications \cite{Handbook,Popa,Proske}.

In this paper we discuss fractional generalizations of the filtering problem to the case when the state and observation processes are driven by time-changed Brownian motion or L\'evy processes. Fractional model of the filtering problem significantly extends the scope of the filtering problems both theoretically and their engineering and other applications.
As a time change process we consider the inverse of the L\'evy stable subordinator with the stability index $\beta$ or their mixtures. We note that these processes are not Gaussian (exponential) and are not L\'evy processes. These processes are special type of semimartingales. The associated Zakai equations in this case involve a fractional order derivative  in the sense of Riemann-Liouville. For instance, if the driving process is a time-changed Brownian motion where the time-change process $T_t$ is the first hitting time of the stable L\'evy subordinator with the stability index $\beta, ~ 0<\beta<1,$ then the associated Zakai equation is given by the following partial stochastic differential equation driven by a semimartingale $Z_{T_t}$ (see for details in Section \ref{Sec: FrZakai}):
\begin{equation} \label{fracSDE}
d\Phi(t,x)= {\mathcal{D}}_t^{1-\beta} A^{\ast} \Phi(t,x) dt+ \sum_{k=1}^m  h_k(x)\Phi(t,x) dZ^{(k)}_{T_t}, ~~ \Phi(0,x)=P(X_0=x),
\end{equation}
where $A^{\ast}$ is the adjoint operator of the infinitesimal generator of the Markovian process $X_t,$ and ${\mathcal{D}}_t^{1-\beta}$ is the fractional differentiation operator of order $1-\beta$ in the sense of Riemann-Liouville. 
By definition, the fractional derivative of order $\beta, ~ 0<\beta<1,$ in the Riemann-Liouville sense is ${\mathcal{D}}_t^{1-\beta} = \frac{d}{dt} J^{\beta},$ where the fractional integration operator $J^{\beta}$ is defined as
\[
J^{\beta} f(t) =\frac{1}{\Gamma(\beta)} \int_0^t (t-\tau)^{\beta-1}f(\tau) d \tau, \, t>0.
\]
 Due to the presence of the fractional derivative ${\mathcal{D}}_t^{1-\beta}$ in \eqref{fracSDE} it is natural to call such a stochastic differential equation a \textit{fractional  Zakai type equation.} Obviously, the usual Zakai equation  is recovered if $\beta \to 1.$ 

{In the mathematical literature stochastic differential equations driven by a fractional Brownian motion are also called {\it fractional.} However, the nature of stochastic differential equations \eqref{fracSDE}, that is fractional Zakai type stochastic differential equations, totally different from the nature of those fractional stochastic differential equations driven by a fractional Brownian motion.}


\section{Generalized filtering problems}
\label{Sec: Generalization}

\textit{2.1. Generalization of the filtering problem with a L\'evy processes.}
Many processes in the modern science and engineering do not obey the Gaussian law  for the state process in equation \eqref{state} and for the observation process in equation \eqref{observ}. Let $L_t, ~ t\ge 0,$ be an $n$-dimensional L\'evy process.
L\'evy
processes can be characterized by the L\'evy-It\^o decomposition theorem, which
states that
\begin{equation} \label{levy-ito}
L_t=b_0 t + \sigma B_t +\int_{|w|<1} w \tilde{N}(t,dw) +
\int_{|w|\ge1} w N(t,dw),
\end{equation}
where $b_0 \in {\mathbb{R}^n},$ $\sigma$ is an $n \times m$-matrix such that
$\sigma \sigma^{T}=\Sigma$, $B_t$ is an $m$-dimensional Brownian
motion, and $N_t$ and $\tilde{N}_t$ are a compound Poisson random measure and
a compensated Poisson martingale-valued measure, respectively \cite{Sato}.
It is well known \cite{Sato} that L\'evy processes have a c\`adl\`ag
modification and are semimartingales. 
Any L\'evy process is uniquely defined by a triple  $(b,\Sigma, \nu),$ where  $b \in \mathbb{R}^n,$ $\Sigma$ is a
nonnegative definite matrix,  and a measure $\nu$ defined on
$\mathbb{R}^n \setminus \{0\}$ such that $\int \min (1,|x|^2) d \nu
<\infty$. L\'evy processes can also be
characterized by the L\'evy-Khintchine formula
in terms of its characteristic function $\Phi_t(\xi)= E(e^{i \xi L_t}) = e^{t \Psi
(\xi)}$, with
\begin{equation}
\label{levy-khintchin} \Psi(\xi) = i(b,\xi) - \frac{1}{2} (\Sigma
\xi, \xi) + \int_{\mathbb{R}^n \setminus \{0\}} \hspace{-1mm}(e^{i(w, \xi)}- 1-i
(w,\xi) \chi_{(|w| \le 1)}(w))\nu(dw).
\end{equation}
The function $\Psi$ is called the \textit{L\'evy symbol} of
$L_t.$  For any L\'evy process, its
L\'evy symbol is continuous, hermitian, conditionally positive
definite
 and $\Psi(0)=0.$  The infinitesimal generator of the L\'evy process
with characteristics $(b,\Sigma, \nu)$ is a pseudo-differential
operator
${A}={A}(\mathbf{D}_x)$ with the symbol
$\Psi(\xi)$ defined in (\ref{levy-khintchin}).

A natural generalization of the filtering problem \eqref{state}-\eqref{obsproc} is to replace Brownian motions $B_t$ in \eqref{state} and $W_t$ in \eqref{obsproc} with L\'evy processes $L_t$ and $M_t,$ respectively. Namely, consider a state process $X_t$ governed by the L\'evy process $L_t:$
\begin{align} 
     X_t &=X_0 + \int_0^t b(X_{s-}) ds + \int_0^t \sigma(X_{s-}) dB_s \notag \\
          &+ \int_0^t \int_{|w|<1} H(X_{s-},w)\tilde{N}(ds, dw)+ \int_0^t \int_{|w|\geq 1} K(X_{s-},w) N(ds, dw),  \label{stateLevy}
\end{align}
where $X_0$ is a random variable independent of $B_t$ and
$N(t,\cdot);$ the continuous mappings $b:\mathbb{R}^n \to
\mathbb{R}^n,$ $\sigma: \mathbb{R}^n\to \mathbb{R}^{n \times m},$
$H:\mathbb{R}^n \times \mathbb{R}^n \to \mathbb{R}^n$ and
$K:\mathbb{R}^n\times \mathbb{R}^n \to \mathbb{R}^n$ satisfy the Lipschitz and linear growth
conditions. 
The infinitesimal generator $\mathcal{A}$ of the process $X_t$ is a pseudo-differential operator with the symbol
\begin{align}
\Psi(x,\xi)
     &= i(b(x),\xi) - \frac{1}{2} (\Sigma (x) \xi, \xi) \notag \\
     & + \int_{\mathbb{R}^n \setminus \{0\}} \hspace{-2mm} (e^{i(G(x,w),\xi)}- 1-i(G(x,w), \xi) \chi_{(|w| < 1)}(w))\nu(dw), \label{levysymbol}
\end{align}
where $G(x,w)=H(x,w)$ if $|w|<1,$ and $G(x,w)=K(x,w)$ if $|w|\ge 1$
\cite{Applebaum}. By definition, a pseudo-differential operator $\mathcal{A}$ with the symbol $\Psi(x,\xi)$ is
\begin{equation} \label{Levygenerator}
\mathcal{A} \varphi(x)=\frac{1}{(2\pi)^2} \int_{R^n} \Psi(x,\xi) \hat{\varphi}(\xi)e^{-ix \xi} d\xi,
\end{equation}
where $\hat{\varphi}$ is the Fourier transform of $\phi$ in the domain of $\mathcal{A}$ (see details in \cite{Applebaum,Hermander}).
Let the observation process is given by 
\begin{align} 
     Z_t &= \int_0^t \mu(s, X_{s-}) ds + \int_0^t \nu(s, X_{s-}) dW_s \notag \\
          &+ \int_0^t \int_{|w|<1} g(X_{s-},w)\tilde{M}(ds, dw)+ \int_0^t \int_{|w|\geq 1} f(X_{s-},w) M(ds, dw), \label{observLevy}
\end{align}
where Brownian motion $W_t$ is independent of $B_t$ in equation \eqref{stateLevy}, the measures $M_t$ and $\tilde{M}_t$ are a compound Poisson random measure and
a compensated Poisson martingale-valued measure,  and mappings $\mu(t,x), ~ \nu(t,x), ~ g(t,x,w),$ and $f(t,x,w)$ satisfy the Lipschitz and linear growth conditions. 
Let ${\mathcal{Z}}_t$ be the sigma-algebra generated by the process $Z_s, \, 0 \le s \le t.$ 
Now the generalized filtering problem is formulated as follows: 
\textit{find the best estimation of $X_t$ given $\mathcal{Z}_t.$}

\medskip

Particular cases of this problem is discussed in Chapter 4 of \cite{Handbook} and in papers \cite{Popa,Proske}. Namely, consider the following two cases:
\begin{enumerate}
\item[(1)] the state process is driven by a L\'evy process, and
\item[(2)] the observation process is driven by a L\'evy process.
\end{enumerate}
In the first case the filtering model is formulated as follows: The state process is given by stochastic differential equation \eqref{stateLevy} driven by a L\'evy process,
and the observation process is given by 
\begin{equation} \label{obser_1case}
Z_t=Z_0+W_t+\int_0^t h(X_s)ds,
\end{equation}
where $X_t$ and $Z_t$ are respectively an $R^n$-valued and $R^m$-valued stochastic processes. Then (see \cite{Popa}) the unnormalized conditional distribution of $f(X_t)$ given $\mathcal{Z}_t$, that is ${\phi}_t (f)=E[f(X_t)|\mathcal{Z}_t]$ under some conditions, satisfies the Zakai type stochastic partial differential equation
\begin{equation} \label{Zakai11}
{\phi}_(f)=\phi_0(f)+\int_0^t {\phi}_s(Af)ds + \sum_{k=1}^m\int_0^t {\phi}_s({fh_k})dZ_{ks},
\end{equation}
where $\mathcal{A}$ is the infinitesimal generator of the process $X_t$ defined in \eqref{Levygenerator},  and $h_k$ and $Z_k$ are components of vectors $h(x)=(h_1(x),...,h_m(x))$ and $Z_t=(Z_{1t},...,Z_{mt}).$

In the second case of the filtering model the state process is given by
\begin{equation}\label{state_2case}
X_t=X_0+\int_0^t b(X_s) ds +\sigma(X_s)dB_s,
\end{equation}
and the observation process is
\begin{align} \label{observ_2case}
     Z_t =Z_0 &+ \int_0^t h(s,X_{s}) ds + W_t + \int_{R} w N_{\lambda}(dt,dw),
\end{align}
where $N_{\lambda}$ is an integer valued random measure with predictable compensator $\lambda(t,X_t,\omega)dt d\nu,$ with a L\'evy measure $\nu.$ Let $\Phi(t,x)$ be a filtering density, that is for arbitrary infinitely differentiable function $f$ with a compact support the relation
\[
\phi_t(f)=\int_{R^n} f(x) \Phi(t,x)dx
\]
holds. Then (see \cite{Proske}) the corresponding Zakai type equation has the form
\begin{align} \label{Zakai2}
{\Phi}(t,x) &= p_0(x)+\int_0^t A^{\ast} {\Phi}(s,x)ds \notag\\
               &+ \int_0^t h(s,x)\Phi(s,x)dB_s + \int_0^t \int_R (\lambda(s,x,w)-1) {\Phi}(s,x)\tilde{N}(ds,dw),
\end{align}
where $A^{\ast}$ is the dual of the infinitesimal generator $A$ of $X_t$ and 
$$\tilde{N}(ds,dw)=N(ds,dw)-dsd\nu.$$ We note that in this case due to absence of jump components of the state process $X_t$ its infinitesimal generator $A$ is not a pseudo-differential operator.  The operator $A$ is 
a second order elliptic differential operator
\begin{equation} \label{operator}
A\varphi(x)=\frac{1}{2}\sum_{i,k=1}^n a_{i k}(x) \frac{\partial^2 \varphi(x)}{\partial x_i \partial x_k} +\sum_{k=1}^n b_k(x) \frac{\partial \varphi(x)}{\partial x_k},
\end{equation}
with the coefficients $a_{i k}(x),$  entries of the matrix-function $a(x)=\{a_{i k}(x)\}_{i,k=1}^n$ obtained by multiplying $\sigma(x)$ by its transpose  $\sigma(x)^{t}.$

It is not hard to verify that both cases recover the classical Zakai equation \eqref{Zakai10} if the jump component of the L\'evy process is absent, that is if $\nu \equiv 0.$

\bigskip

\textit{2.2. Generalization of the filtering problem with time-changed L\'evy processes.}
We note that the driving stochastic processes in the filtering model \eqref{stateLevy} and \eqref{observLevy} are semimartingales with independent increments. If $T$ is a L\'evy subordinator then $L_T$ is still a L\'evy process. Therefore, replacement of L\'evy processes $L_t$ and $M_t$ in \eqref{stateLevy} and \eqref{observLevy} with their time-changed ones $L_{T_1}$ and $M_{T_2},$ where $T_1$ and $T_2$ are L\'evy subordinators, does not expand the scope of the filtering models given by equations \eqref{stateLevy} and \eqref{observLevy}. If $T$ is an inverse to a L\'evy subordinator then the time-changed process $L_T$ is no longer a L\'evy process.  However, it is a semimartingale. Therefore, stochastic integrals driven by time-changed processes $L_T,$ where $T$ is the inverse  to a stable L\'evy subordinator, are well defined. 

Consider the following model of the filtering problem driving process of which is a time-changed L\'evy process. The state process in this context is given by
\begin{align} 
     X_t &=X_0 + \int_0^t b(X_{s-}) dT_s + \int_0^t \sigma(X_{s-}) dB_{T_s} \notag \\
          &+ \int_0^t \int_{|w|<1} H(X_{s-},w)\tilde{N}(dT_s, dw)+ \int_0^t \int_{|w|\geq 1} K(X_{s-},w) N(dT_s, dw),    \label{stateLevyFrac}
\end{align}
and the observation process is
\begin{align} 
     Z_t & = \int_0^t \mu(s, X_{s-}) dE_s + \int_0^t \nu(s, X_{s-}) dW_{E_s} \notag \\
          &+ \int_0^t \int_{|w|<1} g(X_{s-},w)\tilde{M}(dE_s, dw)+ \int_0^t \int_{|w|\geq 1} f(X_{s-},w) M(dE_s, dw),\label{observLevyFrac}
\end{align}
where $T_t$ and $E_t$ are inverse processes to stable L\'evy subordinators. Notice, that if $T_t=t$ and $E_t=t$ then the filtering model \eqref{stateLevyFrac} and \eqref{observLevyFrac} represents the model \eqref{stateLevy} and \eqref{observLevy} for filtering problem driven by L\'evy processes. Therefore, replacement of L\'evy processes $L_t$ and $M_t$ in the filtering model \eqref{stateLevy}, \eqref{observLevy} with $L_{T}$ and $M_E,$ respectively, where $T$ and $E$ are inverse L\'evy subordinators does expand essentially the scope of the model \eqref{stateLevy}, \eqref{observLevy}.

In Section \ref{Sec: FrZakai} we will solve this problem under certain constraints. We will need some preliminary facts on L\'evy subordinators and their inverses. 
Let
$E_t$ be the first hitting time  process for a stable subordinator
$D_t$ with stability index ${\beta \in(0,1)}.$ The process $E_t$ is
also called an inverse to $D_t.$ The relation between $E_t$ and
$D_t$ can be expressed as $E_t= \min \{\tau: D_{\tau} \ge t\}.$ The
process $D_{t}, ~ t \ge 0,$ is a self-similar L\'evy process with
$D_0=0,$ that is $D_{ct}=c^{\frac 1 \beta} D_t$ as processes in the
sense of finite dimensional distributions, and its Laplace transform
is $\mathbb{E}(e^{-sD_t})=e^{-ts^{\beta}}.$ The density
$f_{D_{_1}}(\tau)$ of $D_1$ is infinitely differentiable on $(0,
\infty),$ with the following asymptotics at zero and infinity
\cite{MinardiLuchkoPagnini,UzhaykinZolotarev}:
\begin{align}
&f_{{D_{_1}}}(\tau) \sim \frac{({\frac \beta
\tau})^{\frac{2-\beta}{2(1-\beta)}}}{\sqrt{2\pi \beta (1-\beta)}} \,
e^{-(1-\beta)({\frac \tau  \beta})^{-\frac{\beta}{1-\beta}}}, \,
\tau
\to 0; \label{atzero}\\
&f_{D_{_1}}(\tau) \sim \frac{\beta}{\Gamma(1-\beta) \tau^{1+\beta}},
\, \tau \to \infty. \label{atinfinity}
\end{align} Since
$D_t$ is strictly increasing, its inverse process $E_t$ is
continuous and nondecreasing, but not a L\'evy process. Likewise for any L\'evy process $L_t$ the
time-changed process $L_{E_t}$ is also not a L\'evy process (see
details in \cite{HKU,HU}). 

Let  $g_t(\tau)$ be the density function of $E_t$ for each fixed
$t>0.$ If $f_{D_{_1}}(t)$ is the density function of $D_1,$ then
\begin{equation} \label{relation2}
g_t(\tau)=-\frac{\partial}{\partial \tau}
Jf_{D_{_1}}(\frac{t}{\tau^{1 / \beta}}) = -\frac{\partial}{\partial
\tau} \int_0^{\frac{t}{\tau^{1 / \beta}}} f_{D_{_1}}(u)du
=\frac{t}{\beta \tau^{1+{\frac 1 \beta}}}f_{D_{_1}}(\frac{t}{\tau^{\frac 1
 \beta}}).
 \end{equation}
Since $f_{D_{_1}}\hspace{-0.5mm}(u) \in C^{\infty}(0,\infty),$ it follows from
representation \eqref{relation2} that $g_t(\tau) = \varphi(t,\tau) \in
C^{\infty}(\mathbb{R}_+^2),$ where $\mathbb{R}_+^2=(0,\infty)\times
(0,\infty).$ Further properties of $g_t(\tau)$ are represented in
the following lemma.

\begin{lemma} \label{lemproperties}
Let $g_t(\tau)$ be the function given in \eqref{relation2}.  Then
\begin{enumerate}
\item[$(a)$] $\lim_{t\to +0}g_t(\tau)=\delta_0(\tau)$
~ in the sense of the topology of the space of tempered
distributions $\mathcal{D}^{\prime}(\mathbb{R});$ \vspace{0.8mm}
\item[$(b)$] $\lim_{\tau \to +0}g_t(\tau)=\frac{t^{-\beta}}{\Gamma(1-\beta)}, ~ t>0;$\vspace{0.8mm}
\item[$(c)$] $\lim_{\tau \to \infty}g_t(\tau) = 0, ~ t > 0;$\vspace{0.8mm}
\item[$(d)$] $\mathcal{L}_{t \to s}[g_t(\tau)](s)=s^{\beta-1}e^{-\tau s^{\beta}}, ~ s>0, ~ \tau \ge 0,$
\end{enumerate} 
where $\mathcal{L}_{t \to s}$ denotes the Laplace transform with
respect to the variable $t$.
\end{lemma}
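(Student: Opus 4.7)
The plan is to use a single computational reformulation of $g_t(\tau)$ together with the two asymptotic estimates \eqref{atzero}, \eqref{atinfinity} and the Laplace transform of $D_t$. By self-similarity $D_\tau \stackrel{d}{=} \tau^{1/\beta} D_1$, the density $f_{D_\tau}(t) = \tau^{-1/\beta} f_{D_{_1}}(t/\tau^{1/\beta})$, and formula \eqref{relation2} can be rewritten in the convenient form
\[
g_t(\tau) = \frac{t}{\beta\tau}\, f_{D_\tau}(t).
\]
I would use this identity for part (d), and the change of variable $u = t/\tau^{1/\beta}$ in (2.13) for parts (a)--(c).

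For (a), I would test $g_t(\tau)$ against an arbitrary $\varphi \in \mathcal{S}(\mathbb{R})$. Applying the substitution $u = t/\tau^{1/\beta}$, the $\tau$-integral collapses to
\[
\int_0^\infty g_t(\tau)\varphi(\tau)\,d\tau = \int_0^\infty f_{D_{_1}}(u)\,\varphi\bigl((t/u)^\beta\bigr)\,du,
\]
because the Jacobian exactly cancels the prefactor $t/(\beta \tau^{1+1/\beta})$. Since $\varphi$ is bounded and $f_{D_{_1}}$ is a probability density, dominated convergence as $t\to 0^+$ gives $\varphi(0) \int_0^\infty f_{D_{_1}}(u)\,du = \varphi(0)$, proving $g_t \to \delta_0$ in $\mathcal{D}'(\mathbb{R})$.

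For (b), observe that $\tau \to 0^+$ pushes the argument $t/\tau^{1/\beta}$ of $f_{D_{_1}}$ to $+\infty$; substituting the asymptotic \eqref{atinfinity} into \eqref{relation2} makes the factors $\tau^{1+1/\beta}$ cancel exactly, leaving the constant $t^{-\beta}/\Gamma(1-\beta)$. For (c), $\tau\to\infty$ sends $t/\tau^{1/\beta}\to 0^+$, so the stretched-exponential decay in \eqref{atzero} dominates the algebraic prefactor and drives $g_t(\tau)$ to zero.

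For (d), starting from the Laplace identity $\int_0^\infty e^{-st} f_{D_\tau}(t)\,dt = e^{-\tau s^\beta}$ and differentiating both sides in $s$ produces
\[
\int_0^\infty e^{-st}\, t\, f_{D_\tau}(t)\,dt = \tau\beta s^{\beta-1} e^{-\tau s^\beta};
\]
multiplying by $1/(\beta\tau)$ and using the rewriting $g_t(\tau)=\frac{t}{\beta\tau}f_{D_\tau}(t)$ yields the claimed formula. The only substantive point requiring care is part (a), where convergence must be in the distributional sense; the substitution above reduces it to a clean application of dominated convergence, so no serious obstacle remains.
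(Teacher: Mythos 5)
Your proof is correct: the identity $g_t(\tau)=\frac{t}{\beta\tau}f_{D_\tau}(t)$ follows from self-similarity, the substitution $u=t/\tau^{1/\beta}$ does make the Jacobian cancel the prefactor exactly so that (a) reduces to dominated convergence, the cancellation of the $\tau$-powers in (b) checks out, and differentiating $\int_0^\infty e^{-st}f_{D_\tau}(t)\,dt=e^{-\tau s^\beta}$ in $s$ gives (d) as you claim. The paper itself does not prove this lemma --- it defers to the references \cite{HKU,HU} --- but your argument is essentially the standard derivation used there (Laplace transform of the subordinator plus the asymptotics \eqref{atzero}, \eqref{atinfinity}), so there is nothing substantive to flag; the only cosmetic point is that in (c) the algebraic prefactor $t/(\beta\tau^{1+1/\beta})$ already tends to zero on its own, so the stretched-exponential decay of $f_{D_1}$ near the origin is not even needed to ``dominate'' anything.
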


\begin{lemma} \label{lemmain}
Function $g_t(\tau)$ defined in \eqref{relation2} for each $t>0$
satisfies the following equation
\begin{equation}
\label{relation399} D_{\ast,t}^{\beta}g_{t}(\tau)=
-\frac{\partial}{\partial \tau} g_t(\tau)
-\frac{t^{-\beta}}{\Gamma(1-\beta)}\delta_0 (\tau), 
\end{equation}
in the sense of tempered distributions.
\end{lemma}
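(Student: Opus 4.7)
The plan is to prove the identity by Laplace-transforming in $t$ and using Lemma~\ref{lemproperties} to identify both sides. Treating $\tau>0$ as a parameter, I view both sides of \eqref{relation399} as elements of $\mathcal{D}'(\mathbb{R}_\tau)$ depending on $t$, so the computation is carried out distributionally in $\tau$ while the Laplace transform works pointwise in $t$.

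First I would recall the Caputo Laplace formula: for $0<\beta<1$ and sufficiently regular $f(t)$,
\begin{equation*}
\mathcal{L}_{t\to s}[D_{\ast,t}^{\beta} f(t)](s) = s^{\beta}\tilde{f}(s) - s^{\beta-1}f(0^{+}).
\end{equation*}
Applying this to $g_t(\tau)$ and combining with part~(d) of Lemma~\ref{lemproperties} and the initial value $\lim_{t\to 0^{+}} g_t(\tau) = \delta_0(\tau)$ provided by part~(a), I obtain
\begin{equation*}
\mathcal{L}_{t\to s}\bigl[D_{\ast,t}^{\beta} g_t(\tau)\bigr](s) = s^{\beta}\cdot s^{\beta-1}e^{-\tau s^{\beta}} - s^{\beta-1}\delta_0(\tau) = s^{2\beta-1}e^{-\tau s^{\beta}} - s^{\beta-1}\delta_0(\tau),
\end{equation*}
with this identity understood in $\mathcal{D}'(\mathbb{R}_\tau)$ for each $s>0$.

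Next I would compute the Laplace transform of the right-hand side of \eqref{relation399}. Differentiating part~(d) in $\tau$ (which is legitimate in the distributional sense since $g_t(\tau)\in C^{\infty}(\mathbb{R}_{+}^{2})$) gives
\begin{equation*}
\mathcal{L}_{t\to s}\!\left[-\tfrac{\partial}{\partial \tau}g_t(\tau)\right](s) = -\tfrac{\partial}{\partial \tau}\!\left(s^{\beta-1}e^{-\tau s^{\beta}}\right) = s^{2\beta-1}e^{-\tau s^{\beta}}.
\end{equation*}
Using the elementary identity $\mathcal{L}_{t\to s}[t^{-\beta}/\Gamma(1-\beta)](s) = s^{\beta-1}$, the second term transforms to
\begin{equation*}
\mathcal{L}_{t\to s}\!\left[-\tfrac{t^{-\beta}}{\Gamma(1-\beta)}\delta_0(\tau)\right](s) = -s^{\beta-1}\delta_0(\tau).
\end{equation*}
Adding these two pieces matches the Laplace transform of the left-hand side computed above, so by injectivity of the Laplace transform on $\mathcal{D}'(\mathbb{R}_\tau)$-valued functions of $t$, identity \eqref{relation399} follows.

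The main obstacle is making precise the handling of the distributional initial condition $g_{0^{+}}(\tau)=\delta_0(\tau)$ inside the Caputo Laplace formula, because the standard derivation assumes a classical initial value. I would address this by testing against an arbitrary $\varphi\in\mathcal{S}(\mathbb{R}_\tau)$: the pairing $t\mapsto \langle g_t,\varphi\rangle$ is a smooth bounded function of $t$ whose limit at $0^{+}$ is $\varphi(0)$ (by part~(a)), so the classical Caputo Laplace identity applies to this scalar function and recovers exactly the displayed formula after unpairing with $\varphi$. A minor auxiliary point — the differentiability in $\tau$ of $g_t(\tau)$ and the boundary contribution $-\,t^{-\beta}\delta_0(\tau)/\Gamma(1-\beta)$ arising at $\tau=0^{+}$ — is read off directly from part~(b) of Lemma~\ref{lemproperties}, which supplies the jump that produces the Dirac mass in the distributional $\tau$-derivative.
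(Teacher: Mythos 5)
The paper itself does not prove Lemma \ref{lemmain}; it defers to \cite{HKU,HU}, where the identity is likewise established through the Laplace transform in $t$ and the formula $\mathcal{L}_{t\to s}[g_t(\tau)](s)=s^{\beta-1}e^{-\tau s^{\beta}}$ of Lemma \ref{lemproperties}(d). Your argument is therefore essentially the standard one, and the arithmetic checks out: $s^{\beta}\cdot s^{\beta-1}e^{-\tau s^{\beta}}-s^{\beta-1}\delta_0(\tau)$ on the left matches $-\partial_\tau\bigl(s^{\beta-1}e^{-\tau s^{\beta}}\bigr)-s^{\beta-1}\delta_0(\tau)$ on the right, and the device of pairing with $\varphi\in\mathcal{S}(\mathbb{R}_\tau)$ to reduce the Caputo Laplace formula with distributional initial value $\delta_0(\tau)$ to the scalar case is the right way to make that step rigorous (one should still note that $t\mapsto\langle g_t,\varphi\rangle$ has an integrable derivative near $t=0$, so that $D_{\ast,t}^{\beta}$ of it is defined; this follows from the asymptotics \eqref{atzero}--\eqref{atinfinity}).

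One point in your closing paragraph needs care, because as written it is internally inconsistent with your computation. In the displayed identity \eqref{relation399} the term $-\partial_\tau g_t(\tau)$ must be read as the \emph{classical} derivative on $\tau>0$ (extended as a locally integrable function), which is exactly how you Laplace-transform it; the Dirac term $-t^{-\beta}\delta_0(\tau)/\Gamma(1-\beta)$ is then the separate boundary contribution coming from the jump $g_t(0^{+})=t^{-\beta}/\Gamma(1-\beta)$ of Lemma \ref{lemproperties}(b). If instead, as your last sentence suggests, $-\partial_\tau g_t$ were taken to be the distributional derivative of the zero-extension of $g_t$, that derivative would already contain the term $-g_t(0^{+})\delta_0(\tau)$, and adding the explicit Dirac term would double-count it and break the match of Laplace transforms. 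Your computation uses the correct (classical) reading, so the proof stands; just state that convention explicitly rather than attributing the Dirac mass to the distributional $\tau$-derivative.
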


We refer the reader to papers \cite{HKU,HU} for proofs of these two lemmas.

\section{Fractional filtering problem. Main results}
\label{Sec: FrZakai}

First, for simplicity we consider the  filtering problem with the state process given in the differential form as
\begin{equation} \label{Conjecture1_state}
dX_t=b(X_t)dT_t+\sigma(X_t)dB_{T_t}, ~ X_{t=0}=X_0,
\end{equation}
and driven by a time-changed Brownian motion with drift, where $T_t$ is the inverse of the L\'evy stable subordinator 
with the stability index $\beta \in (0,1).$ The natural observation process associated with the state process \eqref{Conjecture1_state} with invented time-change  has the form
\begin{equation}
dV_t=h(X_t)dT_t+dW_{T_t}, ~ V_0=0. \label{Conjecture1_observ}
\end{equation}

In the theorem below we assume that the input data of this filtering model satisfy the following conditions:
\begin{enumerate}
\item[(C1)] the vector-functions $b(x)$ and $h(x)$ and $n\times m$-matrix-function $\sigma(x)$ are infinite differentiable and bounded;
\item[(C2)] the time-change process $T_t$ and Brownian motions $B_t$ and $W_t$  are independent processes;
\item[(C3)] the initial random vector $X_0$ is independent of processes $B_t,$ $W_t,$ and $T_t$ and has an infinite differentiable density function $p_0(x)$ decaying at infinity faster than any power of $|x|.$
\end{enumerate}
We note that the conditions on infinite differentiability of $b(x), \, h(x),$ and $\sigma(x)$ in (C1), as well as of the density function $p_0(x)$ in (C3) and its decay condition near infinity can be weakened.

The filtering problem \eqref{Conjecture1_state}, \eqref{Conjecture1_observ} is closely related to the filtering problem whose state  process is given by the following (non time-changed ) \^Ito stochastic differential equation
 \begin{equation}
\label{filtering}
dY_t=b(Y_t)dt+\sigma(Y_t)dB_{t}, ~ Y_{t=0}=X_0, \, t>0,
\end{equation}
and whose corresponding observation process is given by
\begin{equation}
dZ_t=h(Y_t)dt+dW_{t}, ~ Z_0=0. \label{observ}
\end{equation}
Introduce the process
\[
\rho(t)=\exp\{-\sum_{k=1}^m \int_0^th_k(Y_s)dW_s-\frac{1}{2}\int_0^t |h(Y_s)|^2ds\}
\]
and the probability measure $dP_0=\rho(t)dP.$ Further, let
\begin{equation} \label{lambda}
{\Lambda}_{t}=\hat{E} \big( \frac{dP}{dP_0}  \big| {\mathcal{Z}}_t \big),
\end{equation}
where the expectation $\hat{E}$ is under the reference measure $P_0.$ Then, as is known, the optimal filtering solution of the filtering problem  \eqref{filtering}, \eqref{observ} is given by the following Kallianpur-Striebel's formula (see, e.g. \cite{Popa,Rozovskii})
\[
E[f(Y_t)|{\mathcal{Z}}_t]=\frac{\hat{E}[f(X_t)\Lambda_t|{\mathcal{Z}}_t]}{\hat{E}[\Lambda_t|{\mathcal{Z}}_t]}.
\]
It is also known (see \cite{Popa,Rozovskii}) that under the reference measure $P_0,$ the process $Z_t$ is a standard Brownian motion independent of $Y_t,$ and $\Lambda_t$ has the representation
\[
\Lambda_t=1+\sum_{k=1}^m\int_0^t \Lambda_sh_k(Y_s) dZ^k_s.
\] 
Moreover, under conditions (C1)-(C3) 
the unnormalized filtering measure $p_t(f)=\hat{E}[f(Y_t)\Lambda_t|{\mathcal{Z}}_t]$ satisfies the following stochastic differential equation called the Zakai equation
\begin{equation} \label{Zakai1}
p_t(f)=p_0(f)+\int_0^t p_s(Af)ds+\sum_{k=1}^m\int_0^tp_s(h_kf)dZ^{k}_s,
\end{equation}
where $A$ is a second order elliptic differential operator given by equation \eqref{operator}.

Further, introducing the filtering density $U(t,x)$ through 
\begin{equation}
p_t(f)=\int_{R^n}f(x)U(t,x) dx
\end{equation}
one can show that $U(t,x)$ solves the (adjoint) Zakai equation

\begin{equation}\label{zakai}
dU(t,x)=A^*U(t,x) dt + \sum_{k=1}^m h_k(x)U(t,x) dZ_k(t),
\end{equation}
with the initial condition $U(0,x)=p_0(x).$ Here $A^{*}$ is the dual operator of $A.$

\begin{theorem} 
\label{Theorem_1}
Let $T_t$ be the inverse to a stable L\'evy subordinator $D_t$ and $\phi_t(f)=\hat{E}[f(X_t)\Lambda_{T_t}|{\mathcal{V}}_t],$ where $\mathcal{V}$ is the filtration generated by $V_t.$ Suppose 
conditions (C1)-C(3) are verified.
Then $\phi_t(f)$ satisfies the following Zakai type equation corresponding to filtering problem \eqref{Conjecture1_state}-\eqref{Conjecture1_observ}: 
\begin{equation} \label{FrZakai10}
\phi_t(f)=p_0(f)+J^{\beta} \phi_t(Af) + 
 \sum_{k=1}^m  \int_0^t  \phi_s(h_k f) dZ^k_{T_s}
 \end{equation}
Moreover, equation \eqref{FrZakai10} has a unique solution represented in the form
\begin{equation}\label{frzaksol}
\phi_t(f)=\int_0^{\infty} g_t(\tau) p_{\tau}(f) d\tau,
\end{equation}
where $g_t(\tau)$ is the density function of the process $T_t$ and $p_t(f)$ is the unnormalized filtering distribution of the Zakai equation \eqref{Zakai1} corresponding to the filtering model \eqref{filtering}-\eqref{observ}.
\end{theorem}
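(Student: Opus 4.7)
Plan: My strategy is to verify that the candidate representation \eqref{frzaksol} satisfies equation \eqref{FrZakai10}, to identify it with the conditional expectation $\phi_t(f)$, and to establish uniqueness.

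First, I would derive \eqref{frzaksol} via a conditioning argument. Since the Radon-Nikodym density $\rho$ depends only on $(Y, W)$ and $T$ is independent of $(B, W)$ by condition (C2), the pair $(Y, Z)$ remains independent of $T$ under the reference measure $P_0$; under $P_0$, the observation $Z$ is a standard Brownian motion. The time-changed SDE \eqref{Conjecture1_state} has the solution representation $X_t = Y_{T_t}$, so $\phi_t(f) = \hat{E}[f(Y_{T_t})\Lambda_{T_t} \mid \mathcal{V}_t]$. Conditioning on the full trajectory of $T$ (which is $\mathcal{V}_t$-measurable since the quadratic variation $[V]_t = T_t$ recovers the time change from the path of $V$) and applying the classical Kallianpur-Striebel identity $\hat{E}[f(Y_\tau)\Lambda_\tau \mid \mathcal{Z}_\tau] = p_\tau(f)$ at the random time $\tau = T_t$, then averaging against the density $g_t(\tau)$ of $T_t$ from \eqref{relation2}, produces \eqref{frzaksol}.

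Second, I would check that the candidate solves \eqref{FrZakai10} by substituting the classical Zakai equation $p_\tau(f) = p_0(f) + \int_0^\tau p_s(Af)\,ds + \sum_k \int_0^\tau p_s(h_k f)\,dZ^k_s$ into \eqref{frzaksol} and applying (ordinary and stochastic) Fubini. Three terms appear. The initial datum contributes $p_0(f)$ because $g_t(\cdot)$ is a probability density. The drift term is identified with $J^\beta \phi_t(Af)$ via a Laplace computation in $t$: writing $r(\tau) = p_\tau(Af)$, Lemma \ref{lemproperties}(d) gives $\mathcal{L}_{t\to s}[\phi_t(Af)] = s^{\beta - 1}\tilde r(s^\beta)$, and a direct check shows that both $\int_0^\infty g_t(\tau)\int_0^\tau r(s)\,ds\,d\tau$ and $J^\beta \phi_t(Af)$ have Laplace transform $s^{-1}\tilde r(s^\beta)$. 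The stochastic integral term is matched with $\sum_k \int_0^t \phi_s(h_k f)\,dZ^k_{T_s}$ via the Jacod-Kobayashi substitution rule for integrals against time-changed semimartingales, combined with an analogous Laplace identification applied to the martingale $\tau \mapsto \int_0^\tau p_s(h_k f)\,dZ^k_s$.

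Third, uniqueness of solutions of \eqref{FrZakai10} follows from a fractional Gronwall estimate: the difference of two solutions satisfies the homogeneous version of \eqref{FrZakai10} with zero initial data, and the boundedness conditions in (C1) combined with the Burkholder-Davis-Gundy inequality for the stochastic integral yield a quadratic bound of Volterra type involving $J^\beta$, which forces the difference to vanish via a fractional Gronwall lemma.

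The principal obstacle is the stochastic time-change identity in the second step, which requires a careful application of the substitution rule for stochastic integrals against a time-changed semimartingale together with verification of predictability and integrability for the integrands $\phi_s(h_k f)$. The conditioning argument in the first step also demands care in identifying $\mathcal{V}_t$ with $\mathcal{Z}_{T_t} \vee \sigma(T_{[0,t]})$ under $P_0$, exploiting that the quadratic variation of $V$ recovers the time change $T$.
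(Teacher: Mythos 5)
Your proposal follows essentially the same architecture as the paper's proof: establish the representation $\phi_t(f)=\int_0^\infty g_t(\tau)p_\tau(f)\,d\tau$ by conditioning on the time change (using $X_t=Y_{T_t}$ and the classical Kallianpur--Striebel formula), substitute the classical Zakai equation \eqref{Zakai1} for $p_\tau(f)$, identify the drift term with $J^\beta\phi_t(Af)$, and handle the stochastic integral via the substitution rule $\int_0^{T_t}H_s\,dZ_s=\int_0^tH_{T_{s-}}\,dZ_{T_s}$ from Jacod. The one technical step you do differently is the drift identification: the paper uses Lemma \ref{lemmain} to get the distributional identity $g_t(\tau)=-\frac{\partial}{\partial\tau}J^\beta_t g_t(\tau)$ and then integrates by parts in $\tau$, whereas you work on the Laplace side using Lemma \ref{lemproperties}(d); both routes are valid and of comparable length, though the paper's integration by parts requires the boundary observation $J^\beta_t g_t(\tau)\to 0$ as $\tau\to\infty$, which your Laplace argument sidesteps. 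You also sketch a uniqueness argument (fractional Gronwall plus Burkholder--Davis--Gundy), which the paper asserts but does not prove, so that is a genuine addition. One caveat: your remark that $T_t$ is recoverable from $\mathcal{V}_t$ via the quadratic variation $[V]_t=T_t$ sits uneasily with the subsequent averaging against the density $g_t(\tau)$ --- if $T_t$ were $\mathcal{V}_t$-measurable one would evaluate $p_\tau(f)$ at $\tau=T_t$ rather than integrate over $\tau$; the paper performs the same averaging without addressing this, so it is not a gap relative to the paper's own argument, but you should not present both claims as simultaneously justifying \eqref{fractionalUFD}.
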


\begin{proof}
Let conditions (C1)-(C3) be verified. Then, in particular, the conditions for the existence of an unnormalized filtering distribution $p_t(f)=\hat{E}[f(Y_t)\Lambda_t|{\mathcal{Z}}_t]$ which  solves the Zakai equation \eqref{Zakai1}, is also verified. Here $Y_t$ is a solution to stochastic differential equation \eqref{filtering}. According to Theorem 3.3 in \cite{HKU} the time-changed process $X_t=Y_{T_{t}}$ solves stochastic differential equation \eqref{Conjecture1_state}. 

The connection $X_t=Y_{T_t}$ between the state processes $X_t$ and $Y_t$ implies the connection $V_t=Z_{T_t}$ between the observation processes $V_t$ and $Z_t.$ Indeed, letting $T_t=\tau,$ or the same $D_{\tau}=t,$ one obtains from the relation $dV_t=h(Y_{T_t})dT_t+dW_{T_t}$ and \eqref{observ} that $Z_{\tau}=V_{D_{\tau}},$ or the same $V_t=Z_{T_t}.$ It follows that the filtration $\mathcal{V}_t$  coincides with the filtration  $\mathcal{Z} \circ \mathcal{T}_t \equiv \mathcal{Z}_{\mathcal{T}_t}$  generated by the time changed observation process $Z_{T_t}.$

Further, the unnormalized filtering distribution $\phi_t(f)=\hat{E}[f(X_t)\Lambda_{T_t}|{\mathcal{Z\circ T}}_t]$  corresponding to the filtering problem \eqref{Conjecture1_state} - \eqref{Conjecture1_observ} is determined by conditioning on $T_t,$ namely $\phi_t(f)=E\Big[ \hat{E}[f(Y_\tau)\Lambda_{\tau}|{\mathcal{Z}}_\tau]\Big| \tau=T_t \Big].$ The latter can be written in the form
\begin{equation}
\label{fractionalUFD}
\phi_t(f)=\int_0^{\infty} g_t(\tau)p_{\tau}(f)d\tau,
\end{equation}
where $g_t(\tau), ~ \tau \in (0,\infty),$ is the density function of the time-change process $T_t$ for each $t\in (0, \infty),$ defined in equation \eqref{relation2}.  
Applying the fractional integration operator $J^{\beta}$ to equation  \eqref{relation399}, we have
\[
g_t(\tau)-\lim_{t \to 0+}g_t(\tau)=-\frac{\partial}{\partial \tau}J^{\beta}_t g_t(\tau)-\frac{\delta_0(\tau)}{\Gamma(1-\beta)}J^{\beta}_t t^{-\beta},
\]
in the sense of distributions. Due to part (a) of Lemma \ref{lemproperties} we have $\lim_{t \to 0+}g_t(\tau)=\delta_0(\tau).$ This fact together with the equation $J^{\beta} t^{-\beta}=\Gamma(1-\beta)$ implies  
\begin{equation} \label{relation100}
g_t(\tau)=-\frac{\partial}{\partial \tau}J^{\beta}_t g_t(\tau).
\end{equation}
Now multiplying both sides of  equation \eqref{Zakai1} by $p_{\tau}(f)$ and integrating over the interval $(0, \infty)$ with respect to the variable $\tau,$ and taking into account the conditioning on the time change process $T_t$,  one obtains
\begin{equation} \label{relation101}
\phi_t(f)-p_0(f)=\int_0^{\infty} g_t(\tau) \Big[\int_0^{\tau} p_s(Af)ds \Big] d\tau + \sum_{k=1}^m\int_0^{T_t} p_s(h_kf)dZ^{(k)}_s.
\end{equation}
Due to relation \eqref{relation100} the first term on the right side of \eqref{relation101}  can be written in the form
\begin{align} 
\int_0^{\infty} g_t(\tau) \Big[\int_0^{\tau} p_s(Af)ds \Big] d\tau&= - \int_0^{\infty} \frac{\partial}{\partial \tau}J^{\beta}_t g_t(\tau) \Big[\int_0^{\tau} p_s(Af)ds \Big] d\tau \notag\\
&= - \int_0^{\infty}  \Big[\int_0^{\tau} p_s(Af)ds \Big] d  \Big[J^{\beta}_t g_t(\tau)\Big] \label{relation105}
\end{align}
Now the integration by parts in \eqref{relation105} implies
\begin{align} 
\int_0^{\infty} g_t(\tau) \Big[\int_0^{\tau} p_s(Af)ds \Big] d\tau&= 
J^{\beta}_t \int_0^{\infty} g_t(\tau)p_{\tau}(Af) d \tau 
= J_t^{\beta} \phi_t(Af), \label{relation106}
\end{align}
since $J^{\beta}_t g_t(\tau) \to 0$ as $\tau \to \infty,$ and $\int_0^{\infty} p_s(Af)ds$ is bounded due to conditions (C1)-(C3). 
The second term on the right side of \eqref{relation101} 
\begin{equation} \label{relation107}
 \sum_{k=1}^m\int_0^{T_t} p_s(h_kf)dZ^{(k)}_s =  \sum_{k=1}^m\int_0^{T_t} \hat{E}[h_k (Y_s) f(Y_s)\Lambda_t|{\mathcal{Z}}_s] dZ^{(k)}_s
 \end{equation}
Using the change of variable formula (see \cite{Jacod}, Proposition 10.21) 
\[
\int_0^{T_t} H_s dZ_s=\int_0^t H_{T_{s-}}dZ_{T_s},
\]
for stochastic integrals driven by a semimartingale, we obtain\footnote{there are no jumps in this particular case}
\begin{align} 
 \sum_{k=1}^m\int_0^{T_t} p_s(h_kf)dZ^{(k)}_s 
 & =  \sum_{k=1}^m\int_0^{t} \hat{E}[h_k (Y_{T_s}) f(Y_{T_s})\Lambda_{T_s}|{\mathcal{Z}}_{T_s}] dZ^{(k)}_{T_s} \notag \\
 &= \sum_{k=1}^m\int_0^{t} \hat{E}[h_k (X_{s}) f(X_{s})\Lambda_{T_s}|{\mathcal{Z}}_{T_s}] dZ^{(k)}_{T_s} \notag \\ 
 & = \sum_{k=1}^m\int_0^{t} \phi_s (h_k f) dZ^{(k)}_{T_s}. \label{relation110}
 \end{align}
Equations \eqref{relation100}, \eqref{relation106}, and \eqref{relation110} imply the desired equation \eqref{FrZakai10}. 

\end{proof}

\begin{remark}
\begin{itemize}
\item[(1)] The process $Z_{T_t}$ is a semimartingale, therefore SDE \eqref{FrZakai10} is meaningful (see, e.g. \cite{Protter}).
\item[(2)] The time-changed processes $B_{T}$ and $W_T$ are not Markovian and has no independent increments. Therefore, the model \eqref{Conjecture1_state}, \eqref{Conjecture1_observ} can be applied to a class of correlated filtering processes. We note also that the classical Zakai equation is recovered when $\beta \to 1.$ 
\end{itemize}
\end{remark}

The differential form of \eqref{FrZakai10} involves a factional derivative in the Riemann-Liuoville sense. 
The differential form of the Zakai equation \eqref{FrZakai10} is
\begin{equation} \label{Zakai200}
d\phi_t(f)= {\mathcal{D}}_t^{1-\beta} \phi_t(Af) dt+ \sum_{k=1}^m  \phi_t (h_k f) dZ^{(k)}_{T_t}, ~~ \phi_{t=0}(f)=p_0(f).
\end{equation}
Stochastic partial differential equations  of the form  \eqref{Zakai200} (or  \eqref{FrZakai10} in the integral form), which involve fractional order derivatives (or fractional order integrals in its integral form)  will be called \textit{fractional Zakai type equations.}

Further, we introduce a filtering density and derive the adjoint Zakai equation generalizing the equation \eqref{zakai}. Let $\Phi(t,x)$ be defined as a generalized function 
\begin{equation} \label{Fr_density}
\phi_t(f)=\int_{R^n}f(x)\Phi(t,x)dx,
\end{equation}
for arbitrary infinite differentiable function $f$ with compact support. The function $\Phi(t,x)$ is called a \textit{filtering density associated with the filtering measure $\phi_t(f).$}

\begin{theorem} \label{Zakai_1}
Let the conditions (C1)-(C3) are verified. Then the filtering density $\Phi(t,x)$ associated with the filtering measure $\phi_t(f)$ in equation \eqref{Zakai200} satisfies the following fractional Zakai type equation
\begin{equation} \label{Zakai210}
d\Phi(t,x)= {\mathcal{D}}_t^{1-\beta} A^{\ast} \Phi(t,x) dt+ \sum_{k=1}^m  h_k(x)\Phi(t,x) dZ^{(k)}_{T_t}, 
\end{equation}
with the initial condition
$\Phi(0,x)=P(X_0=x).$
\end{theorem}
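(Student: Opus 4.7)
The plan is to derive (\ref{Zakai210}) directly from the already-established measure-valued equation (\ref{Zakai200}) of Theorem \ref{Theorem_1} by pairing against an arbitrary test function $f \in C_c^\infty(\mathbb{R}^n)$, transferring the operator $A$ to its formal adjoint $A^*$, and invoking the fundamental lemma of the calculus of variations. Since the fractional derivative $\mathcal{D}_t^{1-\beta}$ and the stochastic differential $d_t$ act only on the variable $t$, while $A$ and multiplication by $h_k$ act only on $x$, all the steps should cleanly decouple.

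First I would substitute the defining relation (\ref{Fr_density}) into each term of (\ref{Zakai200}):
\begin{align*}
\phi_t(f) &= \int_{\mathbb{R}^n} f(x)\,\Phi(t,x)\,dx,\\
\phi_t(Af) &= \int_{\mathbb{R}^n} (Af)(x)\,\Phi(t,x)\,dx = \int_{\mathbb{R}^n} f(x)\,A^{*}\Phi(t,x)\,dx,\\
\phi_t(h_k f) &= \int_{\mathbb{R}^n} h_k(x)\,f(x)\,\Phi(t,x)\,dx.
\end{align*}
The middle identity is the formal duality of the second-order elliptic operator $A$ given in (\ref{operator}); boundary terms vanish because $f$ has compact support, while the smoothness and boundedness of $b,\sigma$ from (C1) together with the Schwartz-type decay of $p_0$ from (C3) (inherited by $\Phi(t,\cdot)$ via the representation $\Phi(t,x)=\int_0^\infty g_t(\tau)U(\tau,x)\,d\tau$ obtained from (\ref{frzaksol})) guarantee the requisite regularity and integrability.

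Next I would commute the operators acting in $t$ (the differential $d_t$ on the left, $\mathcal{D}_t^{1-\beta}$ on the right-hand drift, and the stochastic integration $dZ^{(k)}_{T_t}$) with the $x$-integration against $f$. This reduces (\ref{Zakai200}) to
\begin{equation*}
\int_{\mathbb{R}^n} f(x)\Bigl[d\Phi(t,x) - \mathcal{D}_t^{1-\beta}A^{*}\Phi(t,x)\,dt - \sum_{k=1}^m h_k(x)\Phi(t,x)\,dZ^{(k)}_{T_t}\Bigr]dx = 0.
\end{equation*}
Because this holds for every $f \in C_c^\infty(\mathbb{R}^n)$, the bracketed expression must vanish pointwise in $x$ for almost every $\omega$, giving (\ref{Zakai210}). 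The initial condition follows from $\phi_0(f) = p_0(f) = \int f(x)p_0(x)\,dx$, so $\Phi(0,x) = p_0(x)$, which is what the statement denotes (informally) by $P(X_0 = x)$.

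The main obstacle is not conceptual but the justification of the interchange of the fractional time operator $\mathcal{D}_t^{1-\beta} = \frac{d}{dt}J^{\beta}$ and the stochastic differential $dZ^{(k)}_{T_t}$ with $x$-integration, together with verifying that $\Phi(t,x)$ is smooth enough in $x$ for the integration by parts transferring $A$ to $A^*$ to be rigorous rather than purely formal. Both issues are resolved through the hypotheses (C1)--(C3): the smooth bounded coefficients and the Schwartz-class initial density produce a smooth rapidly decaying unnormalized density $U(\tau,x)$ for the classical Zakai equation (\ref{zakai}), and representation (\ref{frzaksol}) passes that regularity on to $\Phi(t,x)$, while Fubini-type arguments legitimize all the interchanges.
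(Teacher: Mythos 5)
Your proposal is correct and follows essentially the same route as the paper, whose proof consists of the single observation that the result follows from equation \eqref{Zakai200} by substituting $\phi_t(f)$ from \eqref{Fr_density}; you have simply spelled out the duality step $\phi_t(Af)=\int f(x)A^{*}\Phi(t,x)\,dx$, the appeal to the arbitrariness of the test function, and the interchange/regularity issues that the paper leaves implicit. No further changes are needed.
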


The proof of this theorem immediately follows from equation \eqref{Zakai200} substituting $\phi_t(f)$
in \eqref{Fr_density}.

Now we generalize the results of Theorem \ref{Theorem_1} and \ref{Zakai_1} to two different cases of filtering problems whose either state or observation processes are driven by a time-changed L\'evy process. Without  time-changed driving processes these two cases were discussed in Section \ref{Sec: Generalization}. Let $L_t$ be the L\'evy process given by equation \eqref{levy-ito}. The first case is the fractional filtering problem the state process of which is given by stochastic differential equation \eqref{stateLevyFrac} and the observation process of which is given by equation \eqref{Conjecture1_observ}.
Suppose that the input data of this filtering model satisfy the conditions:
\begin{enumerate}
\item[$(C1)^{'}$] the vector-functions $b(x), \, h(x), \, H(x,w), \, K(x,w)$ and  the matrix-function $\sigma(x)$ are infinite differentiable and bounded;
\item[$(C2)^{'}$] the time-change process $T_t,$ Brownian motions $B_t$ and $W_t,$ and Poisson random measures $\tilde{N}(t,\cdot)$ and $N(t,\cdot)$  are independent processes;
\item[$(C3)^{'}$] the initial random vector $X_0$ is independent of processes $B_t,$ $W_t,$ $\tilde{N}(t,\cdot),$ $N(t,\cdot)$$T_t$ and has an infinite differentiable density function $p_0(x)$ decaying at infinity faster that any power of $|x|.$
\end{enumerate}

\begin{theorem} 
\label{Theorem_2}
Let $T_t$ be the inverse to a stable L\'evy subordinator $D_t$ and $\phi_t(f)=\hat{E}[f(X_t)\Lambda_{T_t}|{\mathcal{Z\circ T}}_t],$ where $\mathcal{T}$ is the filtration generated by $T_t$ and $\Lambda_t$ is defined by \eqref{lambda}. Suppose 
conditions $(C1)^{'}-(C3)^{'}$ are verified.
Then $\phi_t(f)$ satisfies the following Zakai type equation corresponding to filtering problem \eqref{stateLevyFrac}, \eqref{Conjecture1_observ}: 
\begin{equation} \label{FrZakai300}
\phi_t(f)=p_0(f)+J^{\beta} \phi_t(\mathcal{A} f) + 
 \sum_{k=1}^m  \int_0^t  \phi_s(h_k f) dZ^k_{T_s},
 \end{equation}
 where the operator $\mathcal{A}$ is a pseudo-differential operator with the symbol $\Psi(x,\xi)$ given by \eqref{levysymbol}. 
Moreover, equation \eqref{FrZakai300} has a unique solution represented in the form
\begin{equation}\label{frzaksol}
\phi_t(f)=\int_0^{\infty} g_t(\tau) p_{\tau}(f) d\tau,
\end{equation}
where $g_t(\tau)$ is the density function of the process $T_t$ and $p_t(f)$ is the unnormalized filtering measure of the Zakai equation \eqref{Zakai11} corresponding to the filtering model \eqref{stateLevy}, \eqref{obser_1case}.
\end{theorem}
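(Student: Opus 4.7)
The plan is to mimic the proof of Theorem \ref{Theorem_1} essentially verbatim, with the second-order elliptic operator $A$ replaced by the pseudo-differential operator $\mathcal{A}$ with symbol \eqref{levysymbol}, and with the classical Zakai equation \eqref{Zakai1} replaced by its L\'evy counterpart \eqref{Zakai11}. Under conditions $(C1)'$--$(C3)'$, the untime-changed filtering problem \eqref{stateLevy}, \eqref{obser_1case} admits an unnormalized filtering measure $p_\tau(f)=\hat{E}[f(Y_\tau)\Lambda_\tau|\mathcal{Z}_\tau]$ solving \eqref{Zakai11}; this is the result I would take as the starting input, together with the L\'evy analogue of Theorem 3.3 of \cite{HKU} to relate the time-changed and untime-changed state processes.

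First I would establish the bridge $X_t=Y_{T_t}$: by the time-change theorem for L\'evy-driven SDEs, $Y_{T_t}$ satisfies \eqref{stateLevyFrac}, and as in the proof of Theorem \ref{Theorem_1} the substitution $T_t=\tau,\ D_\tau=t$ in the observation SDE gives $V_t=Z_{T_t}$, so the filtration generated by the time-changed observation agrees with $\mathcal{Z}\circ\mathcal{T}_t$. Conditioning on the time-change process $T_t$ and using the independence assumption $(C2)'$ then yields directly
\begin{equation*}
\phi_t(f)=\hat{E}\bigl[\hat{E}[f(Y_\tau)\Lambda_\tau|\mathcal{Z}_\tau]\bigm|\tau=T_t\bigr]=\int_0^\infty g_t(\tau)\,p_\tau(f)\,d\tau,
\end{equation*}
which is the claimed representation \eqref{frzaksol} and simultaneously establishes existence.

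To recover the integral equation \eqref{FrZakai300}, I would multiply equation \eqref{Zakai11} (written with dummy time variable $\tau$) by $g_t(\tau)$ and integrate over $\tau\in(0,\infty)$. The drift term is handled by relation \eqref{relation100} and integration by parts, mirroring \eqref{relation105}--\eqref{relation106}, to produce $J^\beta_t\phi_t(\mathcal{A}f)$; the boundary contribution vanishes since $J^\beta_t g_t(\tau)\to 0$ as $\tau\to\infty$ and $\int_0^\tau p_s(\mathcal{A}f)\,ds$ is bounded under $(C1)'$--$(C3)'$. The stochastic integral term is reduced to $\sum_{k=1}^m\int_0^t \phi_s(h_kf)\,dZ^{(k)}_{T_s}$ by the change of variable formula for semimartingale-driven integrals (Proposition 10.21 in \cite{Jacod}), exactly as in \eqref{relation107}--\eqref{relation110}. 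Because the observation process \eqref{Conjecture1_observ} is still driven only by Brownian components, the driving semimartingale $Z$ is continuous and no jump corrections are needed.

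The main obstacle I anticipate is twofold. First, one must justify boundedness of $p_s(\mathcal{A}f)$ uniformly in $s$ for the integration-by-parts step: since $\mathcal{A}$ is a genuinely nonlocal pseudo-differential operator, this requires showing that $\mathcal{A}f\in L^\infty$ for $f\in C_c^\infty$, which follows from the symbol expression \eqref{levysymbol} together with the integrability $\int\min(1,|w|^2)\nu(dw)<\infty$ and the bound $|p_s(\mathcal{A}f)|\le\|\mathcal{A}f\|_\infty$. Second, the L\'evy analogue of Theorem 3.3 of \cite{HKU} must handle the time-changed compensated Poisson martingale measure $\tilde{N}(dT_s,dw)$ and Poisson measure $N(dT_s,dw)$; this follows from the general theory of semimartingale time-changes under the independence assumption $(C2)'$, but deserves an explicit citation. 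Uniqueness follows, as in Theorem \ref{Theorem_1}, from the uniqueness of $p_\tau(f)$ combined with the injectivity of the map $p_\tau(\cdot)\mapsto\int_0^\infty g_t(\tau)p_\tau(\cdot)\,d\tau$ via its Laplace transform, using part $(d)$ of Lemma \ref{lemproperties}.
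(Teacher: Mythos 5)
Your proposal is correct and follows essentially the same route the paper intends: the paper gives no separate argument for Theorem \ref{Theorem_2}, stating only that its proof is similar to that of Theorem \ref{Theorem_1}, and your adaptation (replacing $A$ by $\mathcal{A}$, \eqref{Zakai1} by \eqref{Zakai11}, and invoking the time-change result of \cite{HKU} --- whose Theorem 3.3 already covers L\'evy-driven SDEs with jumps) is exactly that adaptation, with the added benefit of making explicit the boundedness of $\mathcal{A}f$ and the uniqueness argument that the paper leaves implicit.
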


The second case is the filtering problem 
whose state process is driven by a time-changed Brownian motion and the observation process is driven by a time-changed L\'evy process. Namely, let the state process be given by equation \eqref{Conjecture1_state} and the observation process be given by the following stochastic differential equation
\begin{equation} \label{Observ400}
dH_t=h(X_t)dT_t+dW_{T_t}+\int_{R} wN_{\lambda}(dT_t,dw), ~ V_0=0,
\end{equation}
where the random measure $N_{\lambda}$ has the predictable compensator $\lambda(t,X_t,w)dt\nu(dw)$ with a L\'evy measure $\nu.$ Introduce the process 
\begin{align} 
\hspace{-0.7cm} 
{\cal{L}}_t &= \exp\{-\sum_{k=1}^m \int_0^th_k(Y_s)dW_s \notag \\
                  &- \frac{1}{2}\int_0^t |h(Y_s)|^2ds \notag 
      + \int_0^t\int_{R\setminus \{0\}}\ln \lambda(s,X_s, w ) N_{\lambda}(ds,dw) \\ 
                  &+ \int_0^t \int_{R \setminus \{0\}}(1-\lambda(s,X_s, w))ds \nu(d w)\}.   \label{relation400}
\end{align}

\begin{theorem} 
\label{Theorem_3}
Let $T_t$ be the inverse to a stable L\'evy subordinator $D_t$ and $\phi_t(f)=\hat{E}[f(X_t){\cal{L}}_{T_t}|{\mathcal{H}}_t],$ where $\mathcal{H}$ is the filtration generated by the process $H_t$ in equation \eqref{Observ400} and ${\cal{L}}_t$ is defined by \eqref{relation400}. Suppose 
conditions $(C1), (C2)^{'}$ and $(C3)^{'}$ are verified.
Then $\phi_t(f)$ satisfies the following Zakai type equation corresponding to filtering problem \eqref{Conjecture1_state}, \eqref{Observ400}: 
\begin{align} 
\phi_t(f) 
              &=p_0(f)+J^{\beta} \phi_t(Af) + 
 \sum_{k=1}^m  \int_0^t  \phi_s(h_k f) dZ^k_{T_s} \notag \\ 
              &+ \int_0^t \int_{R \setminus \{0\}}p_s\Big((\lambda(s, \cdot, w)-1)f\Big)d\hat{N}_T(ds,dw),\label{FrZakai320}
 \end{align}
 where $\hat{N}_T(ds,dw)=N_T(ds,dw)-dT_s \nu(dw)$ and the operator $A$ is defined in equation \eqref{operator}.
 Moreover, equation \eqref{FrZakai320} has a unique solution represented in the form
\begin{equation}\label{frzaksol}
\phi_t(f)=\int_0^{\infty} g_t(\tau) p_{\tau}(f) d\tau,
\end{equation}
where $g_t(\tau)$ is the density function of the process $T_t$ and $p_t(f)$ is the unnormalized filtering measure of the Zakai equation adjoint to \eqref{Zakai2} corresponding to the filtering model \eqref{state_2case}, \eqref{observ_2case}.
\end{theorem}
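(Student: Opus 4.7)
The plan is to mirror the proof of Theorem \ref{Theorem_1}, adapting the argument to accommodate the jump component in the observation process \eqref{Observ400}. First, I would invoke Theorem 3.3 of \cite{HKU} to identify $X_t = Y_{T_t}$, where $Y_t$ solves the non-time-changed SDE \eqref{state_2case}. The time substitution $T_t = \tau$ applied to \eqref{Observ400} then yields $H_t = K_{T_t}$, where $K_t$ is the non-time-changed observation process \eqref{observ_2case} (with the driving Brownian motion written as $W$ and jumps via $N_\lambda$). Consequently $\mathcal{H}_t = \mathcal{K} \circ \mathcal{T}_t$, the time-changed filtration associated with the classical observation. By conditioning the unnormalized filtering functional on $T_t$ one obtains the representation
\begin{equation*}
\phi_t(f) = \int_0^\infty g_t(\tau)\, p_\tau(f)\, d\tau,
\end{equation*}
where $p_\tau(f) = \hat{E}[f(Y_\tau)\mathcal{L}_\tau \mid \mathcal{K}_\tau]$ is the unnormalized filtering measure for the classical problem \eqref{state_2case}--\eqref{observ_2case}, known (see \cite{Proske}) to satisfy the measure form of the adjoint of the Zakai equation \eqref{Zakai2}.

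Next, I would substitute this classical Zakai equation into the integral representation, producing three contributions: the initial value $p_0(f)$, a drift integral $\int_0^\infty g_t(\tau) \int_0^\tau p_s(Af)\, ds\, d\tau$, and two stochastic integral terms (Brownian and compensated-jump). The drift term is handled exactly as in the proof of Theorem \ref{Theorem_1}: using the identity $g_t(\tau) = -\frac{\partial}{\partial \tau} J_t^\beta g_t(\tau)$ that follows from Lemma \ref{lemmain} and Lemma \ref{lemproperties}(a), an integration by parts in $\tau$ converts this term into $J^\beta \phi_t(Af)$, the boundary terms vanishing by Lemma \ref{lemproperties}(c) and boundedness of $p_s(Af)$ guaranteed by conditions $(C1)$--$(C3)'$.

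For the two stochastic integrals, I would apply the semimartingale change-of-variables formula (Proposition 10.21 of \cite{Jacod}) under the continuous time change $T$. For the Brownian piece this proceeds verbatim as in Theorem \ref{Theorem_1}, yielding $\sum_k \int_0^t \phi_s(h_k f)\, dZ^k_{T_s}$. For the compensated Poisson integral, the analogous formula
\begin{equation*}
\int_0^{T_t}\!\!\int_{R\setminus\{0\}} \psi_s(w)\, \tilde N(ds,dw) = \int_0^t\!\!\int_{R\setminus\{0\}} \psi_{T_{s-}}(w)\, \hat N_T(ds,dw)
\end{equation*}
applies, with $\hat{N}_T(ds,dw) = N_T(ds,dw) - dT_s\, \nu(dw)$. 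After collecting terms one arrives at \eqref{FrZakai320}. Uniqueness is then handled by the standard observation that any $\mathcal{H}_t$-adapted solution of \eqref{FrZakai320} coincides with the conditional-expectation representation obtained via \eqref{frzaksol} (by applying $\mathcal{D}_t^{1-\beta}$ to both sides one sees that a difference of two solutions solves a homogeneous fractional SPDE whose only solution under the stated regularity is zero).

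The main obstacle will be the rigorous justification of the jump change-of-variables formula and the identification of $\hat{N}_T$ as a compensated measure in the time-changed filtration. Although $T$ is continuous and nondecreasing (so no new jumps are created by the time change), one must verify that $dT_s\,\nu(dw)$ is the $(\mathcal{H}_t)$-predictable compensator of $N_T$; this is delicate because $T$ is not strictly increasing and has intervals of constancy where the time-changed process is stationary. I would handle this by approximating $T$ by strictly increasing continuous changes (e.g.\ $T_t + \varepsilon t$), applying the classical compensator transformation on each approximation, and passing to the limit using the regularity afforded by conditions $(C1)$--$(C3)'$ and the smoothness of $g_t(\tau)$ established in Section \ref{Sec: Generalization}.
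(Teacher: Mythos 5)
Your proposal is correct and follows essentially the same route as the paper, which for Theorem \ref{Theorem_3} simply states that the proof is analogous to that of Theorem \ref{Theorem_1} (conditioning on $T_t$ to get \eqref{frzaksol}, using $g_t(\tau)=-\frac{\partial}{\partial\tau}J_t^\beta g_t(\tau)$ plus integration by parts for the drift term, and the semimartingale change-of-variables formula for the stochastic integrals). Your additional care in justifying the compensator identification for the time-changed jump measure $\hat N_T$ goes beyond anything the paper supplies and addresses a point the authors leave entirely implicit.
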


Proofs of Theorems \ref{Theorem_2} and \ref{Theorem_3} are similar to the proof of Theorem \ref{Theorem_1}. 



\vspace{3mm}

\appendix


\end{document}